\theoremstyle{theorem}
\newtheorem{theorem}{Theorem}
\newtheorem{proposition}[theorem]{Proposition}
\theoremstyle{definition}
\newtheorem{definition}[theorem]{Definition}
\newtheorem{remark}[theorem]{Remark}
\newenvironment{example}
  {\pushQED{\qed}\examplex}
  {\popQED\endexamplex}
\newcommand{\PP}{\mathbb{P}}
\newcommand{\RR}{\mathbb{R}}
\newcommand{\CC}{\mathbb{C} }
\newcommand{\ZZ}{\mathbb{Z}}
\newcommand{\NN}{\mathbb{N}}
\title{Likelihood Equations and Scattering Amplitudes}
\author{Bernd Sturmfels and Simon Telen}
\date{}
\begin{document}

\maketitle
\begin{abstract}
\noindent 
We relate scattering amplitudes in particle physics 
to maximum likelihood estimation for discrete models in algebraic statistics.
The scattering potential plays the role of the log-likelihood function, and its
critical points are solutions to rational function equations.
We study the ML degree of low-rank tensor models in statistics,
and we revisit physical theories proposed by Arkani-Hamed, Cachazo and their collaborators.
Recent advances in numerical algebraic geometry 
are employed to compute and certify critical points.
We also discuss positive models and how to compute their
string amplitudes.
\end{abstract}

\section{Introduction} \label{sec:introduction}

Likelihood equations are equations among
rational functions that arise in various contexts,
notably in high energy physics \cite{arkani2017positive, NAH} and in algebraic statistics \cite{ASCB, Sul}.
We establish a new link between these two fields.
This is interesting for both sides, and may lead to unexpected
advances in nonlinear algebra \cite{MS}.
Specifically, we develop the connection between
maximum likelihood estimation \cite{CHKS, HS} and the
geometric theory of scattering amplitudes~\cite{AG, cachazo2014scattering}.
Our goal is the practical solution of likelihood equations
with certified numerical methods~\cite{BRT, BT}.

On the statistics side, a discrete model is a subvariety
$X$ of the real projective space~$\PP^n$, which
is assumed to intersect the simplex $\Delta_n$ of
positive points. The homogeneous coordinates 
$p = (p_0:p_1:\cdots:p_n)$
are interpreted as unknown probabilities 
for the $n+1$ states, subject to the
constraint that $p$ lies in the model $ X$.
When collecting data, we write
$s_i$ for the number of times the $i$th state was observed.
The data vector  $s = (s_0,s_1,\ldots,s_n )$ is
also viewed modulo scaling, i.e. $s$ lies in
$\Delta_n \subset \PP^n$. We are interested in the {\em log-likelihood function}
\begin{equation}
\label{eq:LLF}
s_0 \cdot {\rm log}(p_0) \,+ \,
s_1 \cdot {\rm log}(p_1) \,+ \,\cdots + \,s_n \cdot {\rm log}(p_n) \,- \,
(s_0{+}s_1{+}\cdots{+}s_n) \cdot {\rm log}(p_0{+}p_1{+} \cdots {+} p_n).
\end{equation}
This is a well-defined function on $\Delta_n \subset \PP^n $. The aim
of likelihood inference in data analysis is to maximize (\ref{eq:LLF})
over all points $p$ in the model $X \cap \Delta_n$. In
algebraic statistics, we care about all complex critical points.
Their number, for generic $s$, is the {\em maximum likelihood  (ML) degree}
of the model $X$.  If $X$ is smooth then
the ML degree equals the Euler characteristic of the open variety
$X^o$, which is the complement of the
divisor in $X$ defined by  $p_0 p_1 \cdots p_n (\sum_{i=0}^n p_i) = 0$.
Computing ML degrees and identifying critical points is an active area of research \cite{RW}.

The situation is similar in the study of potentials and
associated amplitudes in physics.
Here the role of the data vector $s$ is played by the
vector of {\em Mandelstam invariants}, which is
constrained to lie in the {\em kinematic space}.
This mirrors the constraint that the coefficients 
in (\ref{eq:LLF}) sum to zero. In recent physical theories \cite{arkani2018scattering, AG, cachazo2014scattering},
the variety $X^o$ is the configuration space of $m$ points in
general position in $\PP^{k-1}$, up to projective transformations.
This is modeled by  the Grassmannian
${\rm Gr}(k,m) \subset \PP^{\binom{m}{k}-1}$, modulo the action of the torus $(\CC^*)^m$.
Let ${\rm Gr}(k,m)^o$ be the open Grassmannian where all
Pl\"ucker coordinates are nonzero.
We work in the $(k-1)(m-k-1)$-dimensional
manifold $X^o = {\rm Gr}(k,m)^o/(\CC^*)^m$.
The ML degree of $X^o$ is the number of
critical points on~$X^o$ of the potential function,
for generic $s$. A {\em scattering amplitude} is
the sum of a certain rational function over all critical points.
This is a  global residue \cite{CD}, so it evaluates to a
rational function in the Mandelstam invariants~$s$.

The present article is organized as follows.
Section~\ref{sec2} develops the promised connection for the
Grassmannian of lines $(k=2)$. Here $X^o$ is the
moduli space $\mathcal{M}_{0,m}$ of  $m$ marked points in $\PP^1$.
This prominent space is here recast as a statistical model. The ML degree of that model is
$(m-3)!$ and all critical points are real, thanks to Varchenko's Theorem \cite[Theorem 1.5]{ASCB}.
Our computational results for $m \leq 13$ are found in Table \ref{tab:scatteringk2}.
This is extended to arbitrary linear statistical models in Section~\ref{sec3}.
We show in that setting how  the software {\tt HomotopyContinuation.jl}  \cite{BRT, BT}
is used to find and certify all critical points of (\ref{eq:LLF}).
A key idea is to refrain from clearing denominators and
work with rational functions directly.

Section \ref{sec4} concerns higher Grassmannians $(k \geq 3)$ and their
associated likelihood equations.   We focus on the 
case $m=8, k=3$, where the amplitudes literature \cite{CUZ, AG}
reports the ML degree $188112$. We interpret this CEGM theory as a nonlinear
statistical model with $n=47$. Our method
computes and certifies all $188112$ critical points in a few minutes,
for random Mandelstam invariants with $s \geq 0$ in (\ref{eq:LLF}),
and we show that most of them are real.

In Section \ref{sec5} we apply our approach to a
class of models that is important in statistics, namely
conditional independence of identically distributed 
random variables. This corresponds to
symmetric tensors of low rank, so here $X$ is a 
Veronese secant variety. We determine the
ML degree in several new cases, well beyond the degree
 $12$  for tossing coins in the running example of \cite{HKS}.
This opens up a new chapter
in likelihood inference for tensors.

In Section \ref{sec6} we finally turn to amplitudes.
We build on the theory of stringy canonical forms due to
Arkani-Hamed, He and Lam \cite{arkani2019stringy}.
Definition \ref{def:positive} introduces a 
statistical version~of positive geometries
\cite{arkani2018scattering, arkani2020positive}.
 The string amplitudes in
 \cite{arkani2019stringy} are limits of their
marginal likelihood integrals.
They can be computed combinatorially from Newton polytopes, or
as global residues, by summing the reciprocal toric Hessian of 
the function (\ref{eq:LLF}) over its critical points.

\section{Points on the Line} \label{sec2}

We begin with a first direct connection between algebraic statistics and particle physics.
 The $m$-particle CHY scattering equations   \cite{cachazo2014scattering}  will be presented
  as likelihood equations for a linear statistical model on the moduli space $\mathcal{M}_{0,m}$.
  We introduce these rational function equations, and we solve them using 
  state-of-the-art tools from numerical algebraic geometry~\cite{BRT, BT, sommese2005numerical}.

We consider $m \geq 4$ points in $\PP^1$ whose homogeneous coordinates are the columns of
 \begin{equation} \label{eq:overparam}
 \begin{bmatrix}
\phantom{-} 0 & 1 &  1    & 1 & \cdots & 1  & 1  & 1 \,\,\\
 -1 & 0 & x_1& x_2 & \cdots & x_{m-4} & x_{m-3} & 1  \,\,
 \end{bmatrix}.
\end{equation}
We write $q_{ij}$ for the $2 {\times} 2$ minor given by the $i$-th and the $j$-th column of this 
$2 {\times} m$-matrix. The moduli space $\mathcal{M}_{0,m} =  {\rm Gr}(2,m)^o \big/ (\CC^*)^m $
is the set of points for which these minors are non-zero.
This is the complement of a hyperplane arrangement in $\CC^{m-3}$.
The corresponding real arrangement in $\RR^{m-3}$ has $(m-3)! $ bounded regions,
given by the possible orderings of $x_1,x_2,\ldots,x_{m-3}$ in $[0,1]$.
These regions are simplices and they define a triangulation of the cube $[0,1]^{m-3}$.
One of them  is the  positive region $\,\mathcal{M}_{0,m}^+ = \{ 0 < x_1 < x_2 < \cdots < x_{m-3}< 1 \}$.

We now define a statistical model $X$ on $n+1 = m(m-3)/2$ states. The states are the pairs
$(i,j)$ where $2 \leq i < j \leq m$ and $(i,j) \not= (2,m)$. 
The parameter vector $(x_1,\ldots,x_{m-3})$ is assumed to lie in  $\mathcal{M}_{0,m}^+$.
The probability of observing 
the state $(i,j)$ is $\, p_{ij} \, = \,  \alpha_{ij} q_{ij}$, where
\begin{equation}
\label{eq:cleverchoice}
\alpha_{im} = \frac{1}{m-3}\, , \,\,
\alpha_{ij} = \frac{1}{(m-3)^2} \quad {\rm and} \quad
\alpha_{2j} = \frac{2m{-}2j{-}1}{(m-3)^2} \qquad {\rm for} \quad 3 \leq i < j \leq m-1. 
\end{equation}
These positive constants are chosen so that the
sum of the $n+1$ linear expressions $p_{ij}$ equals~$1$.
 
 Suppose we collect data. For each of the $n+1$ states $(i,j)$ as above, we record the
 number $s_{ij}$ of observations of that state. The aim of statistical inference
 is to find the point $\hat x = (\hat x_1, \hat x_2, \ldots, \hat x_{m-3} )$ in  the parameter space $ \mathcal{M}_{0,m}^+$
 that best explains the data. Adopting the classical frequentist framework, this is done by maximizing
 the log-likelihood function  \begin{equation} \label{eq:potential}
 L(x) \,\,\,=\,\,\, \sum_{(i,j)} s_{ij} \log (p_{ij}(x))\, \,\, = \,\,\, \sum_{(i,j)} s_{ij} \log(q_{ij}(x))\,+\, const.
 \end{equation}
 We write ${\rm Crit}(L)$ for the set of critical points of $L$, i.e.~the
solutions of the likelihood equations
\begin{equation} \label{eq:scatteringk2}
\frac{\partial L}{\partial x_1} \,=\, \frac{\partial L}{\partial x_2} \,=\, \,\cdots \,\,= \,\frac{\partial L}{\partial x_{m-3}} \,=\, 0.
\end{equation}
This is a system of $m-3$ rational function equations in the $m-3$ unknowns $x_1, \ldots, x_{m-3}$. 

\begin{proposition} \label{prop:varchenko}
If all $s_{ij}$ are positive  then  (\ref{eq:scatteringk2}) has precisely $(m-3)!$ complex solutions.
All solutions are real, and 
there is one solution for each of the orderings of the $m-3$ coordinates.
\end{proposition}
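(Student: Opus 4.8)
The plan is to identify $L$ with the logarithm of a master function of a hyperplane arrangement and then to quote Varchenko's Theorem together with a count of bounded regions. First I would write out the minors of the matrix (\ref{eq:overparam}) explicitly: the second row gives $q_{2j} = x_{j-2}$ for $3 \le j \le m-1$, the last column gives $q_{im} = 1 - x_{i-2}$ for $3 \le i \le m-1$, and $q_{ij} = x_{j-2} - x_{i-2}$ for $3 \le i < j \le m-1$. Thus, as functions on $\CC^{m-3}$, the $n+1$ linear forms $q_{ij}$ are exactly the forms $x_a$, $1-x_a$ and $x_b - x_a$ defining the hyperplane arrangement $\mathcal{A}$ whose complement is $\mathcal{M}_{0,m}$. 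Since the constants $\alpha_{ij}$ in (\ref{eq:cleverchoice}) are nonzero, $\log p_{ij}(x) = \log q_{ij}(x) + \mathrm{const}$, and hence, as already recorded in (\ref{eq:potential}), the elements of ${\rm Crit}(L)$ are precisely the critical points on $\mathcal{M}_{0,m} \subset \CC^{m-3}$ of the master function $\Phi(x) = \prod_{(i,j)} q_{ij}(x)^{s_{ij}}$.

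Next I would invoke Varchenko's Theorem \cite[Theorem 1.5]{ASCB}. The arrangement $\mathcal{A}$ is essential in $\RR^{m-3}$ --- already the hyperplanes $x_a = 0$ meet only at the origin --- and by hypothesis every weight $s_{ij}$ is positive. Varchenko's Theorem then asserts that $\Phi$ has exactly one critical point in each bounded region of the real arrangement, that each of these is nondegenerate, and that there are no other complex critical points, so that $|{\rm Crit}(L)|$ equals the number of bounded regions of $\mathcal{A}$. In particular every solution of (\ref{eq:scatteringk2}) is real.

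It then remains to count the bounded regions, which was already anticipated in the paragraph preceding the proposition. A chamber is bounded only if each coordinate is trapped between its walls $x_a = 0$ and $x_a = 1$, hence lies in the open cube $(0,1)^{m-3}$; the remaining walls $x_a = x_b$ then subdivide that cube into the $(m-3)!$ order simplices indexed by the permutations of $x_1,\dots,x_{m-3}$. (As a sanity check, $(-1)^{m-3}\chi(\mathcal{M}_{0,m}) = (m-3)!$, so the same count also follows from Zaslavsky's formula.) Combining the last two steps gives exactly $(m-3)!$ complex solutions, all real, one in each order simplex, as claimed.

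The only genuine obstacle is the appeal to the \emph{sharp} form of Varchenko's Theorem. For \emph{generic} weights the Orlik--Terao theorem already pins the complex count to $(-1)^{m-3}\chi(\mathcal{M}_{0,m}) = (m-3)!$; positivity of all $s_{ij}$ is what additionally makes every critical point real and nondegenerate, and what rules out that, for the special non-generic positive weights at hand, critical points collide or escape to the boundary divisor $\bigcup_{(i,j)}\{q_{ij}=0\}$ or to infinity. Granting \cite[Theorem 1.5]{ASCB}, everything else is the bookkeeping above.
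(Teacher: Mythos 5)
Your proof is correct and follows essentially the same route as the paper: identify $L$ (up to an additive constant) with the log of the master function of the braid-type arrangement $\{x_a=0\},\{x_a=1\},\{x_a=x_b\}$, invoke Varchenko's theorem to put one real nondegenerate critical point in each bounded chamber and none elsewhere, and count the bounded chambers as the $(m-3)!$ order simplices inside $[0,1]^{m-3}$. The only cosmetic difference is that the paper cites Varchenko's theorem in the form of \cite[Theorem 13]{CHKS} about likelihood equations of a linear model (which is equivalent here because $\sum p_{ij}\equiv 1$), while you cite \cite[Theorem 1.5]{ASCB} for the master function directly, and you add an Euler-characteristic sanity check; both are fine.
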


\begin{proof}
This result is known in the physics literature. We here derive it from
Varchenko's Theorem in Algebraic Statistics
\cite[Theorem~13]{CHKS}. This
states that the likelihood equations of a
 linear space $X$ have only real solutions, and there is one  solution
  in each bounded region of~the arrangement in 
  the real  linear space $X_\RR $ defined by the  $n+1$ hyperplanes $\{p_i = 0\}$.
  For the CHY model, we identify $X_\RR$ with the parameter space $\RR^{m-3}$, where
 the hyperplanes are $\{x_i = 0\}$, $\{x_i = 1\}$ and $\{x_i = x_j\}$. Every point
with $x_i < 0$ or $x_i > 1 $ for some $i$ can be moved to infinity without
crossing a hyperplane. This implies that the
 bounded regions are the simplices $\{0 < x_{\pi_1}  < \cdots < x_{\pi_{m-3}} < 1\}$,
where $\pi$ runs over all $(m-3)!$ permutations. \end{proof}

\begin{example}[$m=6,n=8$] \label{ex:sixeight}
We consider a linear model $X$ on nine states
 $23, 24, \ldots, 56$. Their probabilities, which sum to $1$, 
 are linear functions of three model parameters $x_1,x_2,x_3$:
 \begin{equation}
\label{eq:model62} \begin{matrix}
  p_{23} \,=\, 5 x_1 /9, &
  p_{24} \,=\, x_2/3, &
  p_{25} \,=\, x_3/9 , \\
  p_{34} = (x_2-x_1)/9, &
  p_{35} = (x_3-x_1)/9 ,&
  p_{45} = (x_3-x_2)/9,  \\
  p_{36} = (1-x_1)/3, &
  p_{46} = (1-x_2)/3, &
  p_{56} = (1-x_3)/3.
  \end{matrix}
 \end{equation}
 This maps
     the tetrahedron
  $\mathcal{M}_{0,6}^+ = \{\,0 < x_1 < x_2 < x_3 < 1\,\}$ into the~probability simplex $\Delta_8$.
Suppose we  collect data with sample size $170$, and the resulting data vector has coordinates
\begin{equation}
\label{eq:data62} \begin{small} s_{23} = 25,  \, s_{24} = 23,\, s_{25} = 16,
\, s_{34} = 12,\, s_{35} = 22,
\, s_{45} = 16,\,s_{36} = 14,\, s_{46} = 15,\, s_{56} = 27. \, \end{small}
\end{equation}
We must solve an optimization problem on
 $\mathcal{M}_{0,6}^+$, namely to maximize  the function
\begin{equation} 
\label{eq:loglike62} \begin{matrix} L \,\,\, = & \!\!\!\!\!\!\!\!\!\!
s_{23} {\rm log}(p_{23})+
s_{24} {\rm log}(p_{24})+
s_{25} {\rm log}(p_{25})+
s_{34} {\rm log}(p_{34})\,+ \qquad \qquad \\ & \,\,\quad
s_{35} {\rm log}(p_{35})+
s_{45} {\rm log}(p_{45})+
s_{36} {\rm log}(p_{36})+
s_{46} {\rm log}(p_{46})+
s_{56} {\rm log}(p_{56}).
\end{matrix}
\end{equation}
The set ${\rm Crit}(L)$ has one point in
each bounded region of
the arrangement of nine planes $\{ p_{ij} = 0 \}$ in~$\RR^3$.
The six bounded regions lie in the
cube $[0,1]^3$. They correspond to the orderings of the values $x_1, x_2,  x_3$.
For instance, for the data in (\ref{eq:data62}), the six critical  points are 
$$ \begin{small} \begin{matrix}
\hat x_1 = 0.240043275929170 , & \hat x_2 = 0.508172206739870, & \hat x_3 = 0.777005866817260; \\
  x_1 = 0.223437550855307 , &   x_2 = 0.843543048681696, &   x_3 = 0.518706389808326; \\
  x_1 = 0.481967726451097 , &   x_2 = 0.235545240880672, &   x_3 = 0.781115679885971; \\
  x_1 = 0.618277926209287 , &   x_2 = 0.851974456945199, &   x_3 = 0.155992558374125; \\
  x_1 = 0.861996060709608 , &   x_2 = 0.217605043343923, &   x_3 = 0.453238947004789; \\
  x_1 = 0.863192417250353 , &   x_2 = 0.578669456252017, &   x_3 = 0.157960116395912. \\
\end{matrix} \end{small}
$$
The first triple is the maximum likelihood estimate.
The learned distribution
 in the model~is
\begin{equation}
\label{eq:learned}  \begin{small} \begin{matrix} 
\hat p_{23} = 0.13336 , &
\hat p_{24} = 0.16939 , & 
\hat p_{25} = 0.08633, &
\hat p_{34} = 0.02979, &
\hat p_{35} = 0.05966 , \\
\hat p_{36} = 0.25332, & 
\hat p_{45} = 0.02987, & 
\hat p_{46} = 0.16394, &
\hat p_{56} = 0.07433. &
\end{matrix} \end{small}
\end{equation}
We shall see that  this computation can be done for much larger values of $m$ and~$n$.
\end{example}

 We now turn to physics. In quantum field theory, the  $s_{ij}$ are known as \emph{Mandelstam invariants}.
 One writes them in a symmetric $m \times m$-matrix with zeros on the diagonal, so we
 have $s_{ii} = 0$ and    $s_{ij} = s_{ji}$. Momentum conservation means that
    the row sums are zero, i.e.~$ \sum_{j=1}^m s_{ij} = 0\,$ for $ i = 1, \ldots, m$.
  These equations define the \emph{kinematic space}, which has dimension   $n+1 = \binom{m}{2} - m$.
  On that space,  the $m$ Mandelstam invariants $  s_{12}, s_{13}, \ldots, s_{1m}$ and
  $s_{2m}$ can be written uniquely in terms of our counts
  $s_{ij}$ in the statistical model above.
  For instance, for $m=6$, the kinematic space is parametrized by the nine counts
in (\ref{eq:data62}) via
\begin{equation}
\label{eq:kinemat}
 \begin{matrix}
s_{12} = s_{34} + s_{35} + s_{36} + s_{45} + s_{46} + s_{56},&
s_{13} = -s_{23} - s_{34} - s_{35} - s_{36},\, \\
s_{16} = s_{23} + s_{34} + s_{35} + s_{24} + s_{45} + s_{25},  &
s_{14} = -s_{24} - s_{34} - s_{45} - s_{46},    \\
s_{26} = -s_{23}{-}s_{34}{-}s_{35}{-}s_{36}{-}s_{24}{-}s_{45} {-}s_{46}{-}s_{25} {-} s_{56}, &
s_{15} = -s_{25} - s_{35} - s_{45} - s_{56}. 
\end{matrix}
\end{equation}
  The scattering potential in the CHY model 
  coincides with the log-likelihood function $L$,
  up to the additive constant in \eqref{eq:potential}. Hence
  the scattering equations are the likelihood equations.
      
\begin{table}[h]
\centering
\begin{tabular}{l|lllll}
$m$ & $ n+1 $ & $(m \! - \! 3)! $ & $t_\CC $ & $t_\RR$ & $t_{\text{cert}}$ \\ \hline
$10$ & \,\, 35 & 5040 & 0.75 & 0.28 & 0.5 \\
$11$ & \,\, 44& 40320 & 13.4 & 3.4 & 4.0 \\
$12$ & \,\, 54 & 362880 & 124.6 & 43.7 & 45.0 \\
$13$ & \,\, 65 & 3628800 & 2141.5 & 578.2 & 1178.0
\end{tabular}
\caption{Computing and certifying solutions to CHY scattering equations with the 
 method in Section \ref{sec3}. 
Here  $t_\CC, t_\RR, t_{\text{cert}}$ denote timings (in seconds)
that are explained  in Example~\ref{ex:computationsk2}.}
\label{tab:scatteringk2}
\end{table}

  We now come to the punchline of this section:
  {\em current off-the-shelf software from numerical algebraic geometry is
  highly efficient and reliable in solving our equations}. For our computations we
   used the {\tt julia} package   {\tt HomotopyContinuation.jl},
    due to Breiding and Timme \cite{BT}, including
  the recent certification feature \cite{BRT} which is based on interval arithmetic.

  In Table \ref{tab:scatteringk2} we present the timings we obtained
  for solving the scattering equations (\ref{eq:scatteringk2}) when
  the number of particles is    $m=10,11,12,13$.
  Recall that the solutions are the critical points of $L$ in the
  moduli space   $\mathcal{M}_{0,m}$.
    In later sections we apply these methods for solving likelihood equations coming from other statistical models,
    including higher Grassmannians.
    
The first two columns in  Table \ref{tab:scatteringk2}
show the number $n+1 = m(m-3)/2$ of states in the
statistical model and the ML degree $(m-3)!$.
The last three columns  show computation times.
The most relevant among these is $t_\RR$.
This is the time in seconds for computing 
all $(m-3)!$ real critical points for 
a given system of  Mandelstam invariants $s_{ij}>0$.
For instance, for $m=12$, it takes less than one
minute to compute all
$(12-3)!  = 362880$ solutions. 

 \section{Linear Models and How to Compute} \label{sec3}
 
We here explain our methodology for solving the likelihood equations.
For ease of illustration we consider linear statistical models,
with the understanding that the computations are
analogous for nonlinear models. 
The scope  of that becomes visible in the
next two sections.

Fix affine-linear polynomials $p_0(x),p_1(x),\ldots,p_n(x)$  with real coefficients
in $d$ unknowns $x = (x_1,x_2,\ldots,x_d)$. We assume that
$p_0(x) + p_1(x) + \cdots + p_n(x) = 1$ and
that the convex polytope 
$\Theta = \{\, x \in \RR^d \,: \,p_i(x) \geq 0 \,\}$ has dimension $d$.
The model is the $d$-dimensional linear space $X$ in $\PP^n$ 
parametrized by $x \mapsto (p_0(x): \cdots : p_n(x))$.
Given any positive real data vector $s = (s_0,s_1,\ldots,s_n)$,
we wish to find all critical points of the log-likelihood function 
$L$ in (\ref{eq:LLF}).  

By Varchenko's Theorem, all complex critical points are real,
and there is one critical point in each  bounded region of the
arrangement of $n+1$ hyperplanes $\{ p_i(x) = 0\}$ in $\RR^d$.
One of these bounded regions is the polytope $\Theta$, so this contains a unique
critical point $\hat x$.
 Its image $\hat p = p(\hat x)$ 
in $\Delta_n$ is the distribution in the model $X$ that best explains the data $s$.

The software  {\tt HomotopyContinuation.jl} \cite{BT} is
very user-friendly.
We will show how to compute all critical points with version 2.3.1.
We start by generating a random linear model:
\begin{verbatim}
@var x[1:d]												
c = rand(n+1); c = c/sum(c)
p = [randn(d)'*x + c[i] for i = 1:n]
p = push!(p,1-sum(p))
\end{verbatim}
The array {\tt p} contains $n+1$ affine polynomials in the unknowns {\tt x}.
Their constant terms are the positive reals in {\tt c} that sum to $1$. The polytope
  $\Theta$ has dimension $d$ since $0 \in {\rm int}(\Theta)$.    
   The next step is to construct the log-likelihood function and compute its derivatives. Using the
     logarithm function in {\tt HomotopyContinuation.jl}, this can be done in two lines of code:
\begin{verbatim}
@var s[0:n]
L = sum([s[i]*log(p[i]) for i = 1:n+1])
F = System(differentiate(L,x), parameters = s)
\end{verbatim}
Here {\tt F} represents the rational map
$F : \CC^d \times \CC^{n+1} \dashrightarrow \CC^d, \,(x,s) \mapsto
 \bigl( \frac{\partial L}{\partial x_1}, \ldots, \frac{\partial L}{\partial x_d } \bigr) $.
 We choose a 
  random complex data vector $s^* \in \CC^{n+1}$, and we solve the system $F(x;s^*) = 0$ as follows:
 \begin{verbatim}
monodromy_result = monodromy_solve(F)
s_star = parameters(monodromy_result)
\end{verbatim}
This uses the \emph{monodromy method} for solving a generic instance of a parametrized family~\cite{duff2019solving}. We stress that we do not turn rational functions into polynomials by clearing denominators. Working directly with the rational functions allows for cheaper evaluation of {\tt F} and it avoids spurious solutions in the hyperplanes 
$\{p_i(x) = 0 \}$. 
Once we have the solutions for $s^* \in \CC^{n+1}$, we can find the solutions for any data 
vector $(s^*)' \in \RR^{n+1}_{> 0}$ via a \emph{(straight line) coefficient parameter homotopy}.
Here the vector $s$ moves from $s^*$ to $(s^*)'$ along a straight line in $\CC^{n+1}$. 

As the \emph{start parameter values} $s^*$ move to the \emph{target parameter values} $(s^*)'$,
the solutions of $F(x;s^*) = 0$ move towards the solutions of $F(x;(s^*)') = 0$. We can track them numerically.
 For details, see \cite[Chapter 7]{sommese2005numerical}. The coefficient parameter homotopy is implemented in the {\tt solve} function. The following code solves 
  $F(x;(s^*)') = 0$ for random $(s^*)' \in  \RR^{n+1}_{>0}$:
\begin{verbatim}
startsols = solutions(monodromy_result)
s_star_prime = rand(length(s))
cp_result = solve(F, startsols; start_parameters = s_star,
                                target_parameters = s_star_prime)
\end{verbatim}
 The solutions computed via monodromy are stored in {\tt startsols}. These can be used as starting points in the coefficient parameter homotopy for solving any new instance $F(x;(s^*)') = 0$ of our
  equations.
  Hence, the monodromy computation happens only once for a given model.

Finally, we \emph{certify} the solutions found by the coefficient parameter homotopy using the certification technique 
described recently in \cite{BRT}. Each solution that has been certified is guaranteed to be an \emph{approximate solution},  in a suitable sense, to our system of equations. 
\begin{verbatim}
cert = certify(F, solutions(cp_result), s_star_prime) 
\end{verbatim}

In our discussion we described a workflow consisting of three steps:
monodromy, coefficient parameter homotopy, and certification.
These steps are easy to run, and they can be applied to any statistical model
and hence to any system of scattering equations in physics.
A nice feature of linear models, like CHY in Section \ref{sec2}, is that the method can solve
the likelihood equations using \emph{real arithmetic only}. This allows us to reduce the computation~time.

We now explain the real arithmetic idea. In general,
one uses complex start values $s^*$ 
is to avoid the discriminant locus of the family $F(x;s) = 0$. For linear models, this locus arises from the
{\em entropic discriminant}, which is a sum of squares by \cite[Theorem 6.2]{KV}.
The real locus has codimension $\geq 2$ and is disjoint from 
$\RR^{n+1}_{>0}$. As the data vector $s$ varies continuously in $\RR^{n+1}_{>0}$, the
solutions to   $F(x;s)= 0$ move in distinct bounded regions  in $\RR^d$.
 Therefore, once we have solved $F(x;s^*) = 0$ for some $s^* \in \RR^{n+1}_{>0}$, we can solve $F(x;(s^*)') = 0$ for
 any $(s^*)' \in \RR^{n+1}_{>0}$ via a straight line coefficient parameter homotopy that uses only real arithmetic. 
 In particular, for computing the MLE, we only need to track one solution, namely that in~$\Theta$.

\begin{example}[Scattering equations on $\mathcal{M}_{0,m}$] \label{ex:computationsk2}
Section~\ref{sec2} addressed a linear model from physics \cite{AG, cachazo2014scattering}
with $d=m-3$, $n= m(m-3)/2-1$ and ML degree $(m-3)!$.
Our computations for Table  \ref{tab:scatteringk2}
used the workflow described above.  The columns 
 $t_\CC$ and $t_{\textup{cert}}$ show the computation times (in seconds) for the coefficient parameter homotopy from $s^* \in \CC^{n+1}$ to $(s^*)' \in \RR^{n+1}_{>0}$ and for the certification respectively. The column  $t_\RR$ shows the 
  time for path tracking over the reals, from $s^* \in \RR^{n+1}_{>0}$ to $(s^*)' \in \RR^{n+1}_{>0}$. 
   In each run, all
    $(m-3)!$ solutions were certified. The time for the monodromy step is not reported, as it 
    is an \emph{off-line step} which happens only once. For instance, for $m=12$,
    the off-line  step  takes about 14 minutes. All computations were run on a 16 GB MacBook Pro
    with an Intel Core i7 processor working at 2.6 GHz.
\end{example}

\begin{example}[Random linear models] \label{ex:RLM}
We examined random models for various $(n,d)$.
Unlike in Section~\ref{sec2}, the $p_i(x)$ are now dense.
The number of bounded regions equals $\binom{n}{d}$.
This is the ML degree; see \cite[eqn~(8)]{HKS}.
 Using the same computer as in Example \ref{ex:computationsk2}, we obtained
 the results in Table \ref{tab:randomlinear}, for various  central binomial coefficients.
 Again, we do not report the timings for the off-line step, 
 which happens once per pair $(n,d)$. All 
 models in Table \ref{tab:randomlinear} were solved easily using the default settings in {\tt HomotopyContinuation.jl}.
 Larger values of $(n,d)$ are more challenging.
 The  straightforward approach we presented above
 ran into numerical difficulties. The monodromy loop
 sometimes failed to find a full set of starting solutions,
 and a few paths got lost in  the coefficient parameter homotopy.
 Solving larger problems reliably will require a more clever approach
   or more conservative settings. \end{example}

\begin{table}[h]
\centering
\begin{tabular}{l|llll}
$(n,d)$ & $ \binom{n}{d} $ & $t_\CC $ & $t_\RR$ & $t_{\text{cert}}$ \smallskip \\ \hline
$(12,6)$ & 924 & 0.25 & 0.09 & 0.15 \\
$(13,6)$ & 1716 & 0.46 & 0.13 & 0.27 \\
$(14,7)$ & 3432  & 1.34 & 0.44  & 0.87 \\
$(15,7)$ & 6435  & 2.12 & 0.87  & 1.46 \\
$(16,8)$ &12870 & 5.06 & 2.00  & 2.91 \\
$(17,8)$ &24310 & 6.25 & 3.60  & 7.25 \\
\end{tabular}
\caption{Solving the likelihood equations for random linear models.
Here $\binom{n}{d}$ is the ML degree, 
 $t_\CC$ and $ t_\RR $ are the timings for solving,
 and $t_{\text{cert}}$ is the timing for certifying the solutions.}
 \label{tab:randomlinear}
\end{table}

Examples \ref{ex:computationsk2} and \ref{ex:RLM} lead to the following conclusion.
The special combinatorial structure of the CHY scattering equations allows 
  us to solve large instances with a fairly naive method.
 Things are different for generic linear models.
 We encountered numerical issues for the default settings when the ML degree 
 exceeds $20000$.
 The same dichotomy occurs for the models studied in the next
 two sections. Low degree and sparsity 
 render the equations from physics especially
  suitable for reliable and certified computations
with {\tt HomotopyContinuation.jl}.

\section{Higher Grassmannians} \label{sec4}

Let ${\rm Gr}(k,m)$ denote the Grassmannian in its Pl\"ucker embedding in
$\PP^{\binom{m}{k}-1}$, with Pl\"ucker coordinates $p_I$ indexed by
increasing sequences $I = (1 \leq i_1 < i_2 < \cdots < i_k \leq m)$.
We~write ${\rm Gr}(k,m)^o$ for the open part
where all $p_I$ are nonzero and
 $X^o$ for its quotient modulo $(\CC^*)^m$.
We represent each point in $X^o$ by a $k \times m$ matrix 
that has been normalized and contains $(k-1)(m-k-1) = {\rm dim}(X^o)$ unknowns.
There are different conventions for setting this~up.
For $k=3$, we place $2m-8$ unknowns $x_1,\ldots,x_{m-4}$
and $y_1 ,  \ldots, y_{m-4}$ in the matrix as follows:
\begin{equation}
\label{eq:threebym}
 \begin{bmatrix} 
\,\, 0 & 0 & 1 & 1  & 1 & 1 & 1 & \cdots & 1 \,\,\\
\,\, 0 & -1 & 0 & 1 & x_1 & x_2 & x_3 & \cdots & x_{m-4}  \,\,\\
\,\, 1 & 0 & 0 & 1 & y_1 & y_2 & y_3 & \cdots & y_{m-4} \,\,
\end{bmatrix}.
\end{equation}
This ensures that $m$ special minors $p_I$ are equal to $1$.
These are the minors indexed by
\begin{equation}
\label{eq:excluded}
I \,\,=\,\, 
123, 124, \ldots, 12m ,\, 134, 234.
\end{equation}
The following result is known in the literature on scattering amplitudes; see
 \cite[Section~7.1]{arkani2019stringy},  \cite[Section 3]{CUZ} and \cite[Appendix C]{AG}.
Our  computations  furnish  an independent~verification.

\begin{proposition} \label{prop:3degrees}
The ML degree of the models $X^o$ for $n=6,7,8$ equals
$26$, $1272$ and $188112$.
\end{proposition}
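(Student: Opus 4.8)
The statement asserts three explicit integers, so the plan is computational: realize the likelihood equations on $X^o$ as a square system of rational functions in the coordinates of \eqref{eq:threebym}, solve it with the pipeline of Section~\ref{sec3}, and match the counts against the values already recorded in the amplitudes literature.

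First I would make the equations explicit. In the matrix \eqref{eq:threebym} every Pl\"ucker coordinate $p_I$ is a polynomial in the $2(m-4)$ unknowns $x_1,\dots,x_{m-4},y_1,\dots,y_{m-4}$, and the $m$ minors indexed by \eqref{eq:excluded} equal $1$. As in the $k=2$ construction of Section~\ref{sec2}, one scales the remaining $p_I$ so that the scattering potential $\sum_I s_I\log p_I$ agrees with the log-likelihood \eqref{eq:LLF} up to an additive constant, with the $s_I$ now constrained to the kinematic space. Its critical equations $\partial L/\partial x_i = \partial L/\partial y_j = 0$ form a square system of $2(m-4)$ rational function equations whose number of solutions in $X^o$ for generic $s$ is, by definition, the ML degree. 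Following Section~\ref{sec3}, I would keep these as rational functions without clearing denominators: this makes evaluation cheap and automatically discards spurious critical points on the divisor $\{\prod_I p_I = 0\}$.

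Next I would run {\tt HomotopyContinuation.jl}. Fix a random complex point $s^*$ in the kinematic space; starting from one solution, apply monodromy loops around the discriminant of the family until the solution set stabilizes, and confirm completeness with the trace test. This should yield $26$, $1272$ and $188112$ solutions for $m=6,7,8$. Tracking them by a coefficient-parameter homotopy to a positive real kinematic vector and running the interval-arithmetic certification of \cite{BRT} then certifies that they are distinct nonsingular approximate solutions, so the counts are exact; the same run records how many are real. A purely symbolic alternative would compute the signed topological Euler characteristic of the smooth manifold $X^o$, but for ${\rm Gr}(3,8)^o/(\CC^*)^8$ this is out of reach, which is why I proceed numerically.

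The hard part is the case $m=8$. With $188112$ critical points the monodromy loop is the computational bottleneck, and --- crucially for a proof rather than an experiment --- one needs a rigorous certificate that the monodromy orbit is the entire solution set and not merely a large subset. The trace test supplies this, provided $s^*$ is verified to be generic in the kinematic space and no solutions escape to infinity or onto $\{p_I = 0\}$; the rational-function formulation handles the latter. With completeness so certified and each solution certified by \cite{BRT}, the agreement of our counts with the values $26$, $1272$, $188112$ of \cite{arkani2019stringy, CUZ, AG} is an independent verification on both the statistics and the physics sides.
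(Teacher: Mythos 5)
Your proposal takes essentially the same route as the paper: set up the $2(m-4)$ rational critical equations on $X^o$ from the normalized matrix \eqref{eq:threebym}, solve by monodromy and coefficient-parameter homotopy without clearing denominators, certify the found solutions by interval arithmetic (which rigorously gives the lower bound), and then argue completeness for the upper bound. The one substantive difference is that the paper's proof sketch is more guarded about the upper bound: it lists the trace test as only one of three possible avenues, alongside the soft-limit degenerations of \cite{CUZ} and Lam's finite-field/Weil-conjectures argument, and explicitly remarks that a general formula for $\chi(X^o)$ is still lacking; your proposal leans on the trace test alone and presents it as a rigorous certificate, which somewhat overstates what a trace test delivers compared with the interval-arithmetic certification of the lower bound.
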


\begin{proof}[Sketch of Proof]
The certification with
 {\tt HomotopyContinuation.jl}
 furnishes a solid proof of the lower bound. 
 The proof is an identity in
 interval arithmetic \cite{BRT}.
The upper bound requires more work.
We can either use the degenerations known as soft limits
\cite{CUZ}, or Thomas Lam's approach (mentioned in \cite[Section 1]{CUZ})
that rests on finite fields and the Weil conjectures, or the trace test
method in numerical algebraic geometry. It would  be desirable
to find a general formula and theoretical understanding for the Euler characteristic 
of $X^o = {\rm Gr}(k,m)^o$.
\end{proof}

In  the development of algebraic statistics there was an earlier attempt to view the Grassmannian ${\rm Gr}(k,m)$
as a discrete statistical model. It has dimension $k(m-k)$, it has
$n+1=\binom{m}{k}$ states, and the Pl\"ucker coordinates are the probabilities.
 We refer to \cite[Section~5]{HKS} where the numbers $4$ and $22$
were reported for the ML degrees of ${\rm Gr}(2,4)$ and ${\rm Gr}(2,5)$.
That model is different from the one studied here, where the dimension is
$(k-1)(m-k-1)$, the number of states is $n+1 = \binom{m}{k} - m$,
and ${\rm Gr}(2,m)$ has ML degree $(m-3)!$.
In light of the ubiquity and importance of the moduli space $\mathcal{M}_{0,m}$,
we have concluded that the physical model $X^o$ above is the better
way to think about the Grassmannian in the setting of algebraic statistics.

In what follows we work in the set-up for $k=3$ as in  \cite{CUZ, AG}.
The task is to compute the set ${\rm Crit}(L)$ of critical points of the scattering potential $L = \sum_I s_I {\rm log}(p_I)$.
We assign positive reals to the $\binom{m}{3}-m$ 
 Mandelstam invariants $s_I$ where $I$ is any triple
not listed in (\ref{eq:excluded}). 
 The $m$ remaining Mandelstam invariants $s_I$ from  (\ref{eq:excluded})  
are determined from the kinematic relations
$$  \sum_{jk} s_{ijk} \,=\, 0, \qquad {\rm for} \quad i = 1, \ldots, m.$$
Here $(s_{ijk})$ is a symmetric tensor with $s_{ijk} = 0$
unless $i,j,k$ are distinct; see \cite[eqn~(1.6)]{AG}.
  Rewriting the kinematic equations, we
obtain formulas that are analogous to (\ref{eq:kinemat}).
However, the $m$ Mandelstam invariants $s_I$ from (\ref{eq:excluded})
do not matter for us, since ${\rm log}(p_I) = 0$, so
they do not appear in the scattering potential  $L$.
For the other $n+1$ indices $I$, the polynomials $p_I$
are bilinear in the unknowns $x_i,y_i$. In conclusion,
our task is to solve a system of $2m-8$ rational function equations
in $2m-8$ unknowns, namely
$\,\frac{\partial L}{\partial x_i} =  \frac{\partial L}{\partial y_i} = 0$ for $i=1,\ldots,m-4$.

\smallskip

We use the techniques from Section \ref{sec3} to solve these equations for $m = 6, 7, 8$. The results are reported in Table \ref{tab:scattering} using the same notation as in the previous sections.
\begin{table}[]
\centering
\begin{tabular}{l|lllll}
$m$ & $n + 1 $ & ML degree & $t_\CC$ & $t_{\text{cert}}$ \\ \hline
$6$ & 14 & 26 & 0.02 & 0.01 \\
$7$ & 28 & 1272  & 0.35 &  0.19 \\
$8$ & 48 & 188112  & 70.03 & 47.71
\end{tabular}
\caption{Computation times for solving the CEGM scattering equations.}
\label{tab:scattering}
\end{table}
For $m = 6$, we confirmed that all $26$ solutions are real (cf.~\cite[Appendix C]{AG}).
In the case $m = 7$,  all $1272$ solutions are computed in a fraction of a second. 
For concreteness, let us consider the data
\begin{equation}
\label{eq:data37} \!\! \begin{small} \begin{matrix}
s_{135}=45, & s_{235}=597, & s_{145}=473, & s_{245}=745 , & 
s_{345}=29 ,& s_{136}=296, & s_{236}=503, &  \\ s_{146}=725, & s_{246}=402, & 
s_{346}=132, & s_{156}=557, & s_{256}=649, & s_{356}=461, & s_{456}=246, & 
 \\ s_{137}=636, & s_{237}=662, & s_{147}=37, & s_{247}=945, & s_{347}=87, & 
 s_{157}=613 , & s_{257}=819, &  \\ s_{357}=889, & s_{457}=473, & 
 s_{167}=665, & s_{267}=57, & s_{367}=340 , & s_{467}=621, & s_{567}=562.
 \end{matrix} \end{small} 
 \end{equation}
 These are the $n+1 = 28$ Mandelstam invariants not in (\ref{eq:excluded}).
 For these data, we found $1272$ solutions in 0.35 seconds, and we certified them
 in 0.19 seconds. Precisely $1210$ of the solutions are real.
   To the best of our knowledge, no complete set of solutions to the scattering
   equations for $m = 7$  with general $s_{ijk}$ has  been reported in the literature so far.
  
 Using {\tt HomotopyContinuation.jl} we can also solve the likelihood equations for $m = 8$.
 This works in the order of minutes. But there are challenges for this large 
 nonlinear model. While our earlier models showed the power of {\tt solve} as a blackbox 
 routine, here the situation is more delicate.
   It may happen that not all $ 188112$ paths are tracked successfully in the coefficient parameter homotopy. 
   For an example, fix the $n+1 = 48$    Mandelstam invariants 
   \begin{equation}
\label{eq:bigmandelstam} \begin{small} \begin{matrix}
s_{135}=632 , & s_{235}=5076 , & s_{145}=6368 , & s_{245}=619 , & s_{345}=8083 , & s_{136}=5762 , &  \\ s_{236}=2099 , & s_{146}=7767 , & s_{246}=9208 , & s_{346}=4889 , & s_{156}=4412 , & s_{256}=1024 , &  \\ s_{356}=5988 , & s_{456}=924 , & s_{137}=3430 , & s_{237}=1017 , & s_{147}=6235 , & s_{247}=8010 , &  \\ s_{347}=9867 , & s_{157}=2364 , & s_{257}=9661 , & s_{357}=7008 , & s_{457}=4706 , & s_{167}=2892 , &  \\ s_{267}=7670 , & s_{367}=5769 , & s_{467}=3188 , & s_{567}=9696 , & s_{138}=6264 , & s_{238}=5878 , &  \\ s_{148}=1442 , & s_{248}=1501 , & s_{348}=4225 , & s_{158}=579 , & s_{258}=7524 , & s_{358}=394 , &  \\ s_{458}=878 , & s_{168}=7684 , & s_{268}=5985 , & s_{368}=9306 , & s_{468}=8429 , & s_{568}=648 , &  \\ s_{178}=697 , & s_{278}=8414 , & s_{378}=3151 , & s_{478}=369 , & s_{578}=3176 ,& s_{678}=8649 .
\end{matrix}
\end{small} 
\end{equation}

Starting with the output {\tt startsols} from the off-line phase,
the command {\tt solve} finds
   $188109$ distinct solutions in $70$ seconds.
   The remaining three solutions are found by a few extra
         minutes of monodromy loops. The 
         $188109$ earlier solutions in {\tt cp\_result} serve as seeds:          
\begin{verbatim}
R = monodromy_solve(F,solutions(cp_result),s_star_prime) 
\end{verbatim}

When running the off-line step for any new statistical model, it is
very helpful to know the ML degree ahead of time. In our situation,
with knowledge of Proposition \ref{prop:3degrees},
we can use the option {\tt  target\_solutions\_count = 188112}  in the command
{\tt monodromy\_solve},  both for off-line and for on-line.
This interrupts the monodromy  loop when all solutions are found. 

All in all, the on-line phase for a given vector of Mandelstam invariants
takes no more than a few minutes. This includes the coefficient parameter homotopy, the on-line monodromy phase
described above, and the certification step that furnishes the proof of correctness.

\begin{remark}
A notable feature of the $k=2$ model in Section~\ref{sec2} is
that all critical points of the log-likelihood function are real (Proposition \ref{prop:varchenko}).
This is no longer true for $k \geq 3$. However, we observed
experimentally that most of  the solutions are real.
In particular, for the data in (\ref{eq:bigmandelstam}),
precisely $149408$ out of $188112 $ critical points are real. 
We do not know whether the $48$ Mandelstam invariants
$s_{ijk}$ can be chosen so that all $188112 $ complex 
solutions are  real. 
\end{remark}

\begin{remark} It would be interesting to
investigate the likelihood geometry of positroid cells in ${\rm Gr}(k,m)$,
taken modulo the $ (\CC^*)^m$ action as in \cite{arkani2020positive}.
The software {\tt HomotopyContinuation.jl} will be useful for
finding the ML degrees of such models.
For these computations, one replaces the matrices in
(\ref{eq:overparam}) and
(\ref{eq:threebym}) with the network parametrization
 of positroid cells \cite{arkani2020positive, TW}.
\end{remark}

\section{Low Rank Tensors} \label{sec5}

In this section we return to algebraic statistics.
We apply our methods to the model of
{\em conditional independence for identically distributed 
random variables}. This corresponds to
symmetric tensors of low rank. We here study their
ML degree and likelihood equations.

We consider symmetric tensors of format $m \times m \times \cdots \times m$ 
where the number of factors is~$\ell$. Our model $X$ is
the variety of symmetric tensors of rank $\leq k$, or equivalently,
the $k$th secant variety of the $\ell$th Veronese embedding of $\PP^{m-1}$.
The dimension of the model equals  ${\rm dim}(X) = km -1$.
We follow the set-up in (\ref{eq:LLF}), with the number of states  $n+1 = \binom{m+\ell-1}{\ell}$.  The state
space is the set $\Omega_{m,\ell}$ of  sequences $I = (i_1,i_2,\ldots,i_m) \in \NN^m$
 with $i_1 + i_2 + \cdots + i_m = \ell$.

The parameter space for our statistical model is the polytope
$\Theta = (\Delta_{m-1})^k \times \Delta_{k-1}$,
where the points $x_i$ in the $i$th simplex $\Delta_{m-1}$ 
are distributions on  the $i$th random variable with $m$ states, and points $y$
in the simplex $\Delta_{k-1}$ specify the mixture parameters.
Hence $x = (x_{i,j})$ is a nonnegative $k \times m$ matrix
whose rows sum to $1$, and $y$ is a nonnegative vector in
$\RR^k$ whose entries sum to $1$. The probability of observing the state $I = (i_1,i_2,\ldots,i_m)$ equals
\begin{equation}
\label{eq:tensorpara}
 p_I(x,y) \,\, = \,\,\frac{\ell!}{i_1! i_2! \cdots i_m!} \sum_{j=1}^k \, y_j  \, x_{j,1}^{i_1} x_{j,2}^{i_2} \cdots x_{j,m}^{i_m} .  
 \end{equation}
The resulting natural parametrization of the conditional independence model is the map
\begin{equation}
\label{eq:tensorpara2}
 \Theta \,\rightarrow \, \Delta_n \,,\,\,\,
      (x,y) \,\mapsto \, \bigl(\,p_I(x,y) \,\bigr)_{I \in \Omega_{m,\ell}} . 
 \end{equation}
This polynomial map is $k! $-to-$1$, due to label swapping, which amounts to
permuting rows of $x$ and entries of $y$. The variety $X$ is the 
image in $\PP^n$ of the complexification of the map (\ref{eq:tensorpara2}).

Fix counts $s_I \in \NN$ for $I \in \Omega_{m,\ell}$.
Statisticians aim to maximize the log-likelihood function
$$ L \quad = \quad \sum_{I \in \Omega_{m,\ell}}  s_I \cdot {\rm log}\bigl( p_I(x,y) \bigr). $$
In this formula we incorporate the substitutions
$x_{j,m} = 1 -\sum_{i=1}^{m-1} x_{j,i}$ and $y_k = 1 - \sum_{j=1}^{k-1} y_j$.

We shall compute all complex critical points of $L$ by solving the likelihood equations
\begin{equation}
\label{eq:tensoreqns}
 \qquad  \frac{\partial L}{\partial x_{i,j}} \, = \, \frac{\partial L}{\partial y_i} \, = \, 0 \qquad
{\rm for}\, \, i =1,2,\ldots,k\, \,\,{\rm and}\, \,\, j = 1,2,\ldots,m-1. 
\end{equation}
This is a system of $km-1$ rational function equations in $km-1$ unknowns. We denote the corresponding rational map by $F(x,y,s): \CC^{km-1} \times \CC^{n+1} \dasharrow \CC^{km-1}$. 
The ML degree of the model $X$ is the number of complex solutions 
to the system (\ref{eq:tensoreqns}) divided by $k ! = 1\cdot 2 \,\cdots\, k $.
The maximum likelihood parameter  $(\hat x, \hat y)$ is one of the
real solutions in the polytope $\Theta$.

\begin{example}
Two small instances were studied in \cite{HKS}. The case $k=m=2,\ell=4$ 
is featured in \cite[Section 1]{HKS} 
where a gambler tosses one of two biased coins four times, and
$X \subset \PP^4$ is the hypersurface given by a $3 \times 3$ Hankel determinant.
This has ML degree~$12$, so (\ref{eq:tensoreqns}) has $24$ solutions.
A data vector $s$ with three local maxima in $\Delta_4$ is listed in
\cite[Example 10]{HKS}. In the table at the end of 
\cite[Section 5]{HKS} we learn that the model with $k=m=2,\ell=5$ has ML degree $39$,
so  (\ref{eq:tensoreqns}) has $78$ solutions. At that time, over $15$ years ago, 
symbolic computing with {\tt Singular} was the method of choice, and finding 
$78$ solutions  was not that easy.
\end{example}

Using the numerical methods presented in Section \ref{sec3}, we solved
 the likelihood equations for
$m=2,3$ and $k=2,3$. For various $\ell$, we ran many iterations
of the monodromy loop\footnote{ The optional argument {\tt group\_action} of {\tt monodromy\_solve} can be used to speed up the computations.} to count the number of solutions
to  (\ref{eq:tensoreqns}). Dividing that number by
$k!$ gives an integer, and that is the ML degree for the model.
A subsequent run of the certification feature in {\tt HomotopyContinuation.jl}
furnishes a proof that the proposed number is a lower bound on the ML degree.
However, our method does not give a proof that this is also an upper bound.

\begin{table}[h]
\setlength{\tabcolsep}{4pt}
\centering
\begin{tabular}{c|ccccccccc}
\backslashbox[8mm]{$k$}{$\ell$}
  & 4 & 5 & 6 & 7 & 8 & 9 & 10 \\ \hline
2 & 12 & 39 & 82 & 158 & 268 & 427 & 634 \\
3  & 1 & 1 & 111 & 645 &$\geq$ 2121 &  
\end{tabular}
\qquad \quad
\begin{tabular}{c|ccccccccc}
\backslashbox[8mm]{$k$}{$\ell$}
& 3 & 4 & 5  \\ \hline
2& 121 & 1449 & 8727 \\
3 & 646 & $\geq$ 100000 &
\end{tabular}
\caption{Experimentally obtained ML degrees for symmetric tensors of rank
 $k$ and order $\ell$. The size is
      $m = 2$ (left) or $m =3$ (right).
  Multiply by $k!$ for the number of solutions to~(\ref{eq:tensoreqns}).   }
\label{tab:tensors} 
\end{table}

\begin{remark}[A view from nonlinear algebra]
Points in the ambient space $\PP^n$ for our models
in Table \ref{tab:tensors} correspond to binary forms and ternary forms.
For example, the entry $111$ on the left is the ML degree for 
the $4 \times 4$ Hankel determinant  which defines binary sextics of rank~$3$.
The entry $646$ on the right concerns plane cubic curves of rank $3$.
This is the hypersurface in $\PP^9$ defined by the {\em Aronhold invariant},
shown in equation (9.15) and Example~11.12 in~\cite{MS}.
We solved the likelihood equations (\ref{eq:tensoreqns}) in the naive way,
by computing all $646 \times 3! = 3876$ zeros of the rational functions.
Further computational progress is surely possible. But,
just like in Example \ref{ex:RLM}, 
 this will require exploiting the special structure of the problem at hand.
 
 A next goal is the likelihood geometry of  $4 \times 4 \times 4 $ tensors.
  For a geometer, these are cubic surfaces in $\PP^3$, with parameters
   $m=4, \ell=3, n = 19$. 
  In the book cover~of~\cite{ASCB}, this means that
 DiaNA now juggles  three dice, each labeled ${\tt A},{\tt C},{\tt G},{\tt T}$. 
 We studied this model for cubic surfaces of rank $k =2$.
Our computations suggest  that the ML degree equals $6483$. 
\end{remark}

We next present an explicit numerical example, for
the model of plane cubics of rank $2$.

\begin{example}[$m=\ell=3,k=2,n=9$]
Consider the data vector $s \in \NN^{10}$ with coordinates
$$ \begin{small} \begin{matrix} 
s_{300}\, = \,8263, && s_{210}\, = \,4935, && s_{201}\, = \,8990, && s_{120}\, = \,7238, && s_{111}\, = \,5034, 
  \\ s_{102}\, = \,5106, &&  s_{030}\, = \,5181, && s_{021}\, = \,6843, && s_{012}\, = \,5282, && s_{003}\, = \,9501.
\end{matrix} \end{small} $$
The log-likelihood function $L$ has $242$ complex critical points, so there
are  $121$ critical points in the
secant variety $X \subset \PP^9$.  Precisely
eight of them lie in the actual model $X \cap \Delta_9 $. These come from
 $16$ critical points in 
$\Theta = \Delta_2 \times \Delta_2 \times \Delta_1$.
The maximum likelihood estimate equals
$$ \begin{small} \begin{matrix} 
\hat{p}_{300} \, = \,0.0661, & \hat{p}_{210} \, = \,0.1585, & \hat{p}_{201}\, = \,0.0937, & \hat{p}_{120}\, = \, 0.1269, & \hat{p}_{111}\, = \, 0.1542, &  \\ \hat{p}_{102}\, = \,0.0711, & \hat{p}_{030}\, = \,0.0340, & \hat{p}_{021}\, =0.0658\, , & \hat{p}_{012} \, = \,0.0883 , & \hat{p}_{003}\, = \, 0.1414.
\end{matrix} \end{small} $$
The $121$ critical points in $X$ are
 $3 {\times} 3 {\times} 3$~tensors of complex rank $2$.
Among these $121$ tensors, $47$ are real.
 We found that $20$ have real rank $2$, so each has two real preimages in $\RR^5$.
 The other $27$ have real rank $3$. They come from complex conjugate pairs
 of parameters $(x,y)$.
\end{example}

We now offer some pertinent remarks on
 numerical algebraic geometry. Our object of interest is the rational map
 $F: \CC^{km-1} \times \CC^{n+1} \dasharrow \CC^{km-1}$ defined by the gradient of~$L$.
 To find all solutions of $F(x,y;s^*) = 0$ for general complex data $ s^* \in \CC^{n+1}$,
  it is necessary that the monodromy action on $F(x,y;s^*)^{-1}(0)$ is transitive. This happens if and only if 
 the incidence variety $\overline{\{ (x,y,s): F(x,y,s) = 0 \} }$
 in the total space $ \CC^{km-1} \times \CC^{n+1}$ is irreducible \cite[Section~2]{duff2019solving}.
However, for our parametrized tensor models, this incidence variety is  reducible.

\begin{example}[$m=k=2, \ell = 4$] 
For any $s$ in $\CC^5$, we
consider  the solutions to the critical equations $ F  = \bigl( \frac{\partial L}{\partial x_{11}}, 
\frac{\partial L}{\partial x_{21}}, \frac{\partial L}{\partial y_1} \bigr) = 0 $
in the open subset where the denominators are nonzero.
The incidence variety $Y $ is the closure of this set in $ \CC^3 \times \CC^5$.
In a Gr\"obner basis approach, this would be computed
by clearing denominators in $F$ and then saturating the denominators.
To appreciate the complexity of this, note that the three numerators
have degree $25$, with  $2025$ terms,  $2418$ terms and $ 2439$ terms respectively.
This is why we do not clear denominators.
 
We see that $Y$ is reducible because
 all terms of $\frac{\partial L}{\partial x_{11}}$ are multiples of $y_1$.
Points in the locus $\{ y_1 = 0 \}$ parametrize  tensors of rank one. 
This gives an extraneous component of~$Y$. Interestingly, $Y$
has dimension $6$, because a rank $1$ tensor arises
from a line of parameter values, given by
$x_{11} = x_{21}$ and $y_1$ arbitrary.
The fibers of the map $Y \rightarrow \CC^5$ contain a
line and $ 24$ isolated points 
that represent $24/2! = 12$ rank-2 tensors. These are the critical points we are interested in. The corresponding 5-dimensional component of $Y$ parametrizes the
{\em likelihood correspondence}, i.e.~the irreducible variety in $\PP^4 \times \PP^4$ from
\cite[Definition 1.5]{HS}.  \end{example}

In summary, one drawback of our approach in this paper
is the presence of extraneous components in the incidence variety. 
From a numerical point of view, this makes the monodromy procedure more challenging. 
The phenomenon of \emph{path jumping} may bring us to other components, leading to the computation of spurious solutions. 
For computing the ML degree of our tensor models,
  we are only interested in critical points 
   in the regular locus of $X$. These are tensors of complex rank exactly $k$. They live on a
   component of $\,Y = \overline{\{F(x,y,s) = 0 \}} \subset \CC^{km-1} \times \CC^{n+1}$, called  the \emph{dominant component} in \cite[Remark~2.2]{duff2019solving}.
            We can compute all solutions on that component by making sure that our seed lies on it. 

\section{Positive Models and their Amplitudes} \label{sec6}

The physical theory of scattering amplitudes is concerned
with evaluating certain integrals of rational functions.
In our statistical setting, these correspond to {\em marginal likelihood integrals}
\begin{equation}
\label{eq:bayesintegral} \int_\Theta p_0(x)^{s_0} p_1(x)^{s_1} \cdots p_n^{s_n} \mu(x) {\rm d}x . 
\end{equation}
Such integrals arise in Bayesian statistics. In that paradigm
one integrates the likelihood function over the
parameter space $\Theta$ where the kernel is given by
a measure $\mu(x)$, known as the {\em prior belief}.
In general, it is a difficult problem to evaluate the integral
(\ref{eq:bayesintegral}) exactly and reliably.
See \cite{LS} for  an approach in the context of conditional independence as in Section \ref{sec5}.

It is a classical theme in mathematical statistics to connect
Bayesian inference with the optimization problem (MLE)
we explored in the previous sections. In this section we 
present new ideas for advancing that theme. These are inspired by 
positive geometries from Feynman diagrams and scattering amplitudes.
We build on the theory of stringy canonical forms~\cite{arkani2019stringy}.

\begin{definition} \label{def:positive}
A discrete statistical model $X$ is called  {\em positive}
if it has a parametrization  by positive rational functions $p_i(x)$
that sum to $1$, where the parameter space is the orthant $\Theta = \RR^d_{> 0}$.
 A positive rational function is the ratio of
 two polynomials with positive coefficients.
\end{definition}

Many familiar models in statistics are positive.
To begin with, the probability simplex $\Delta_n$ of all distributions
on $n+1$ states is a positive model,
thanks to the parametrization
\begin{equation} \label{eq:posimplex}  p : \RR_{>0}^n \,\rightarrow \, \Delta_n, \,\,\,
x \,\mapsto \, \frac{1}{1 {+} x_1{+} x_2 {+} \cdots {+} x_n} \bigl( 1,x_1,x_2,\ldots,x_n \bigr).  
\end{equation}
Next are the two families in \cite[Section 1.2]{ASCB}. In a 
 {\em toric model}, $p_i(x)$ is a monomial with a positive coefficient
divided by the sum of these $n+1$ monomials \cite{carlos, HS}. 
Every {\em linear model} $X$ is a positive model, since
$X \cap \Delta_n$ is a polytope whose vertices have nonnegative coordinates.
 For instance, a positive  parametrization $y \mapsto p(y)$ for Example~\ref{ex:sixeight} 
is found by replacing
\begin{equation}
\label{eq:x_to_y}
x_1 \,=\, \frac{y_1}{1+y_1+y_2+y_3}\,, \,\,
x_2 \,=\, \frac{y_1+y_2}{1+y_1+y_2+y_3} \,\,\,\, {\rm and} \,\,\,\,
x_3 \,=\, \frac{y_1+y_2+y_3}{1+y_1+y_2+y_3}. 
\end{equation}
Every model $X$ with ML degree one is a positive model,
by the parametrization in  \cite[Corollary 3.12]{HS}.
Mixtures of positive models are positive models, by
using (\ref{eq:posimplex}) for the mixture parameters.
In particular, all discrete conditional independence models
\cite[Chapter~4]{Sul} are positive models.
In the setting of Section~\ref{sec5}, 
we use (\ref{eq:posimplex}) to positively parametrize
the factors in $\,\Theta = (\Delta_{m-1})^k \times \Delta_{k-1}$,
and we then compose this with the positive polynomials in~(\ref{eq:tensorpara}).

Fix a positive model $X$. We factor the
numerator and denominator of each $p_i(x)$
into positive polynomials, we write
 $q_1(x), \ldots,q_e(x)$ for all the factors that occur, and
 we augment this list by $x_1,\ldots,x_d$.
We now rewrite the marginal likelihood integral (\ref{eq:bayesintegral}) 
in the form seen in \cite[(1.3)]{arkani2019stringy}.
To this end, we set $\varepsilon = 1$ and
 $\mu(x) = 1$ for now. Then the integral  (\ref{eq:bayesintegral}) becomes
\begin{equation}
\label{eq:bayesintegral2} \varepsilon^d \int_{\RR^d_{>0}}
\bigl[\,x_1^{u_1} \cdots x_d^{u_d} \,q_1(x)^{-v_1} q_2(x)^{-v_2} \cdots \, 
q_e(x)^{-v_e}\bigr]^\varepsilon  \,
\frac{{\rm d} x_1} {x_1} \cdots \frac{{\rm d} x_d}{x_d} ,
\end{equation}
where $u_i, v_j$ are certain $\ZZ$-linear combinations of $s_0,\ldots,s_n$.
The log-likelihood function equals
$$ L \,\,\, = \,\,\, \sum_{i=1}^d u_i \,{\rm log}(x_i) \,-\, \sum_{j=1}^e v_j \,{\rm log}(q_j(x)) . $$
The set ${\rm Crit}(L) \subset \CC^d$ of all critical points of $L$ can be computed
reliably using the methods  in this paper. We define
the {\em toric Hessian} of $L$ to be the symmetric $d \times d$-matrix
$H_L(x)$ whose entries are the rational functions $ \theta_i \theta_j L $, where
$\theta_i = x_i \partial_{x_i}$ is the $i$th Euler~operator.

In their recent work \cite{arkani2019stringy},
Arkani-Hamed, He and Lam  define the {\em string amplitude} of $L$ to be the
limit of the integral (\ref{eq:bayesintegral2}) as $\varepsilon$ tends to zero.
Given data $s$ such that all $v_j$ are positive, they consider the polytope
$P = \sum_{j=1}^e v_j\, {\rm New}(q_i)$ and assume
that $u = (u_1,\ldots,u_d)$ lies in $P$.

\begin{theorem} \label{thm:arkani}
The string amplitude of a positive model $X$ is a rational function
in the data $s_0,s_1,\ldots,s_n$. It equals the volume of the dual polytope
$(P- u)^*$, and it can be computed~as 
\begin{equation}
\label{eq:amplitude}
 {\rm amplitude}(X) \quad = \quad \sum_{\xi \in {\rm Crit}(L)} {{\rm det}(H_L(\xi))}^{-1}. 
 \end{equation}
\end{theorem}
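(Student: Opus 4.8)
The plan is to obtain the statement by combining the theory of stringy canonical forms of Arkani-Hamed, He and Lam \cite{arkani2019stringy} with the global-residue (``scattering equations'') description of amplitudes from \cite{AG, CUZ, cachazo2014scattering, CD}, transported into the statistical language set up above. There are three things to prove: that the $\varepsilon \to 0$ limit of (\ref{eq:bayesintegral2}) exists and is rational in $s$; that this limit is the volume of the polar dual $(P-u)^*$; and that it is computed by the critical-point sum (\ref{eq:amplitude}). First I would establish the first two by a direct tropicalization of the marginal likelihood integral, and then the third by identifying the right-hand side of (\ref{eq:amplitude}) as a global residue that reproduces the same number.

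For the first two assertions, pass to logarithmic coordinates $x_i = e^{y_i}$, so that $\Theta = \RR^d_{>0}$ becomes $\RR^d$ and $\frac{{\rm d} x_1}{x_1}\cdots\frac{{\rm d} x_d}{x_d}$ becomes Lebesgue measure; thus (\ref{eq:bayesintegral2}) reads $\varepsilon^d\int_{\RR^d} e^{\varepsilon L(e^y)}\,{\rm d}y$. Rescaling $y = Y/\varepsilon$ turns this into $\int_{\RR^d} e^{\varepsilon L(e^{Y/\varepsilon})}\,{\rm d}Y$, and the standard tropical limit $\varepsilon\,{\rm log}\,q_j(e^{Y/\varepsilon}) \to \max_{a\in{\rm New}(q_j)}\langle a, Y\rangle$ shows that, using $P = \sum_j v_j\,{\rm New}(q_j)$, the exponent converges pointwise to $\langle u, Y\rangle - \max_{a\in P}\langle a,Y\rangle = -h_{P-u}(Y)$, the support function of the translated polytope $P-u$. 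Since $u\in{\rm int}(P)$ this support function is coercive, so dominated convergence yields
\[
{\rm amplitude}(X) \;=\; \int_{\RR^d} e^{-h_{P-u}(Y)}\,{\rm d}Y .
\]
Because $h_{P-u}$ is piecewise linear on the finitely many maximal cones of the normal fan of $P$, and because $u = u(s)$ is linear in the data, integrating $e^{-h_{P-u}}$ cone by cone gives an explicit closed form that is rational in the entries of $s$; this is also the promised combinatorial recipe via Newton polytopes. Finally, the elementary identity $\int_{\RR^d} e^{-h_K(Y)}\,{\rm d}Y = d!\,{\rm vol}(K^*)$ for a convex body $K$ with the origin in its interior (proved by integrating radially against the gauge of $K^*$) identifies this number with the volume of $(P-u)^*$, up to the normalization conventions of \cite{arkani2019stringy}.

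For the third assertion, I would first record the relation between the toric Hessian and the Jacobian of the gradient map: writing $\theta_i = x_i\partial_{x_i}$ and $g = (\theta_1 L, \ldots, \theta_d L)$, whose zero locus on the torus is ${\rm Crit}(L)$, one has $\theta_k\theta_i L = x_k\,\partial_{x_k}(\theta_i L)$, so $H_L = {\rm diag}(x_1,\ldots,x_d)\cdot\big(\partial_{x_k}g_i\big)$ and hence $\det H_L = x_1\cdots x_d\cdot{\rm Jac}(g)$. Therefore
\[
\sum_{\xi\in{\rm Crit}(L)} \det\big(H_L(\xi)\big)^{-1}
\;=\;
\sum_{g(\xi)=0} \frac{(x_1\cdots x_d)^{-1}}{{\rm Jac}(g)}\Big|_{\xi},
\]
which is the global residue of the rational $d$-form $\frac{{\rm d}x_1\wedge\cdots\wedge{\rm d}x_d}{x_1\cdots x_d\,g_1\cdots g_d}$: a rational function of $s$ by the global residue theorem, and a finite sum because for generic $s$ the number of critical points is the (finite) ML degree of $X$. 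It then remains to match this residue with $\int_{\RR^d}e^{-h_{P-u}}$. For linear models and for the CEGM/Grassmannian models this is exactly the ``scattering equations compute the amplitude'' statement of \cite{AG, CUZ, cachazo2014scattering}; in the general positive case one runs the same argument, deforming the cycle $\RR^d_{>0}$ into the complex torus so that the integral localizes, via the global residue theorem, onto the zeros of $\nabla L$ with reciprocal-Hessian weights.

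The main obstacle is precisely this last matching in the generality of arbitrary positive models. Unlike the linear and CEGM cases, here the incidence variety $\overline{\{\nabla L = 0\}}$ can be reducible, as in Section~\ref{sec5}: there are critical points supported on coordinate hyperplanes or on lower-rank loci, and one must argue that only the critical points on the dominant component contribute a nonzero residue and that their count equals the ML degree appearing in (\ref{eq:amplitude}). Showing that the extraneous components contribute zero — equivalently, justifying the contour deformation in their presence — is the step that requires the most care; the remainder is tropical bookkeeping together with the cited results of \cite{arkani2019stringy}.
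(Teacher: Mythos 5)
Your three--part decomposition (rationality, volume formula, residue sum) matches the logical content of the statement, and your tropicalization of the marginal likelihood integral is a genuinely more explicit route to the first two assertions than the one the paper takes. The paper's Sketch of Proof does not derive the volume formula; it simply points to \cite[(2.5) and (4.15)]{arkani2019stringy}. Your substitution $x=e^{Y/\varepsilon}$, the Maslov dequantization $\varepsilon \log q_j(e^{Y/\varepsilon}) \to h_{\mathrm{New}(q_j)}(Y)$, the dominated-convergence passage to $\int_{\RR^d} e^{-h_{P-u}}\,\mathrm{d}Y$, and the radial-gauge identity $\int e^{-h_K} = d!\,\mathrm{vol}(K^*)$ form a clean, self-contained argument (modulo the normalization of ``volume'', which you correctly flag). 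The reduction of the right-hand side of~\eqref{eq:amplitude} to a global residue via $\det H_L = x_1\cdots x_d\cdot\mathrm{Jac}(g)$ is also correct, and the appeal to \cite{CD} for rationality is exactly what is intended.

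Where you diverge from the paper, and where the real difficulty lies, is the final matching. The paper's mechanism is the \emph{pushforward formula for canonical forms of positive geometries}: the critical equations are read, following \cite[Section~7.1]{arkani2019stringy}, as the saddle-point equations for~\eqref{eq:bayesintegral2} in the \emph{opposite} regime $\varepsilon\to\infty$, and the toric Hessian is the Jacobian \cite[(7.3)]{arkani2019stringy} entering the pushforward \cite[(7.5)]{arkani2019stringy}; the nontrivial input is that this $\varepsilon\to\infty$ evaluation coincides, as a rational function of $s$, with the $\varepsilon\to 0$ polytope volume. Your alternative --- deforming the real cycle $\RR^d_{>0}$ and invoking the global residue theorem to localize the integral onto $\mathrm{Crit}(L)$ --- is a different mechanism and is underdeveloped: the $\varepsilon\to 0$ limit $\int_{\RR^d} e^{-h_{P-u}}\,\mathrm{d}Y$ is a real integral of a non-holomorphic integrand, so there is no complex contour to deform without first passing back to finite $\varepsilon$ and the full analytic integrand, at which point one is essentially redoing the saddle-point/pushforward analysis. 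You should cite and lean on \cite[Section~7]{arkani2019stringy} for this step, as the paper does. Finally, the obstruction you single out --- reducibility of the incidence variety and extraneous components as in Section~\ref{sec5} --- is a computational pathology (relevant to the monodromy method) rather than the theoretical one: in~\eqref{eq:amplitude}, $\mathrm{Crit}(L)$ denotes the finitely many isolated torus critical points for generic $s$, so the positive-dimensional loci of rank-deficient parameters do not enter the sum. The genuine gap is the equality of the two $\varepsilon$-limits, which the paper resolves by citation rather than proof, so on that count both treatments are at the level of a sketch.
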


\begin{proof}[Sketch of Proof]
This is our interpretation of the results in \cite{arkani2019stringy}.
The amplitude depends only on the
Newton polytopes ${\rm New}(q_j)$ and not on the
specific positive coefficients of $q_j(x)$.
The hypothesis that $u$ is in the interior of $P$ ensures that
 (\ref{eq:bayesintegral2}) converges
\cite[Section 4.1]{arkani2019stringy}. The volume formula
appears in \cite[(2.5)]{arkani2019stringy} for $s=1$
and in \cite[(4.15)]{arkani2019stringy} for $s\geq 2$.
The critical equations of $L$ are the saddle point equations for the 
marginal likelihood integral (\ref{eq:bayesintegral2}) when 
$\varepsilon \rightarrow \infty$. These equations appear in
\cite[Section 7.1]{arkani2019stringy}. They encode the
pushforward formula for canonical forms of positive geometries.
The toric Hessian is a convenient tool for writing the
Jacobian of the system \cite[(7.3)]{arkani2019stringy},
and hence for computing the integral in \cite[(7.5)]{arkani2019stringy}.
\end{proof}

\begin{example}[$d=n=2$] \label{ex:triangle}
For the model $X = \PP^2$,
parametrized by~(\ref{eq:posimplex}), the integral (\ref{eq:bayesintegral2}) is
$$ \varepsilon^2 \int_0^\infty \int_0^\infty \left[ \frac{x_1^{s_1} x_2^{s_2}}{(1+x_1+x_2)^{s_0+s_1+s_2} }\right]^\varepsilon
\frac{{\rm d}x_1}{x_1} \frac{{\rm d}x_2}{x_2}  .$$
The log-likelihood function
$L = s_1 {\rm log}(x_1) + s_2 {\rm log}(x_2) - (s_0 + s_1 + s_2) {\rm log}(1 + x_1 + x_2)$
has only one critical point, namely $(\hat x_1 ,\hat x_2) = \frac{1}{s_0}( s_1, s_2)$.
Substituting this into $1/{\rm det}(H_L(x))$, we~get
$$ {\rm amplitude}(X) \quad = \quad \frac{1}{s_0 s_1} + \frac{1}{s_0 s_2} + \frac{1}{s_1 s_2} \quad = \quad
\quad {\rm area}\bigl(\,(\, P - (s_1,s_2)\,)^* \,\bigr) . $$
Here, $P$ is the unit triangle ${\rm conv}\{ (0,0),(0,1),(1,0) \}$
scaled by the sample size $s_0+s_1+s_2$.
\end{example}
 
 The special case $e=1$  in Theorem~\ref{thm:arkani} corresponds to
 the class of {\em toric models} in statistics; see \cite[Section~3]{HS} and \cite[Section~1.2]{ASCB}.
 Any polynomial 
$q(x) = \sum_{j=0}^n c_j x^{{\bf a}_j}$
with positive coefficients $c_j > 0$
defines a toric model $X$, by setting
$p_j(x) = c_j x^{{\bf a}_j}/q(x)$ for $j=0,\ldots,n$.
The ML degree of $X$ depends in subtle ways on the
coefficients $c_j$. This was observed in \cite[Section~7.1]{arkani2019stringy} and
studied in detail in \cite{carlos}. Both sources contain many open
problems. For instance, it is conjectured in \cite{arkani2019stringy}
that the number $(m-3)!$ from Section~\ref{sec2} is the
minimal ML degree among all toric models 
supported on the associahedron.
The diffeomorphism referred to in \cite[Claim 4]{arkani2019stringy}
is the familiar toric {\em moment map} \cite[Theorem 8.24]{MS}.
The amplitude of the toric model $X$ equals the 
{\em adjoint} of the dual Newton polytope $P^*$,
in the sense of Wachspress geometry \cite{KR},
after dividing by the product of
the linear forms given by the facets of $P^*$.
We learned this from unpublished lecture notes by Christian Gaetz
which connect \cite{arkani2017positive} with~\cite{KR}.

\begin{example}[Measuring the dual of a square]
The toric model for $q(x) = 1+x_1+x_2 + x_1 x_2$ is
the {\em independence model for two binary random variables},
with data $s = (s_{ij})_{0 \leq i,j \leq 1}$.
Here $n=3,d=2$, and $X$ is the Segre quadric in $\PP^3$.
The marginal likelihood integral in (\ref{eq:bayesintegral2}) is
$$ \varepsilon^2 \int_0^\infty \int_0^\infty \left[ \frac{x_1^{s_{10}+s_{11}} x_2^{s_{01}+s_{11}}}
{((1+x_1)(1+x_2))^{s_{00}+s_{01}+s_{10}+s_{11}}}\right]^\varepsilon
\frac{{\rm d}x_1}{x_1} \frac{{\rm d}x_2}{x_2}  .$$
The limit for $\varepsilon \rightarrow 0$ is the string amplitude. 
Its denominator is the product of the row and column sums of the contingency table $s$.
The adjoint is the square of the sample size. Hence,
$$ {\rm amplitude}(X) \,\, = \,\, \, \frac{(s_{00} + s_{01} + s_{10} + s_{11})^2}{
(s_{00}+s_{01})(s_{10}+s_{11}) ( s_{00}+s_{10})(s_{01}+s_{11})} .$$
Here $P$ is the square $[0,1]^2$ times the sample size. This is  translated
  by $u= (s_{10}+s_{11}, s_{01}+s_{11})$.
The normalized area  of the dual quadrilateral $(P-u)^*$ equals
the  string amplitude. Note that the assumption $u \in P$ 
from Theorem \ref{thm:arkani} is naturally satisfied in the statistical setting. 
\end{example}

In earlier sections we showed that {\tt HomotopyContinuation.jl} is fast for computing 
the critical set ${\rm Crit}(L)$  of the log-likelihood function $L$. 
And it comes with certification.
We use this to compute the 
sum (\ref{eq:amplitude})
and hence to evaluate string amplitudes for positive models.
While the meaning of these amplitudes for Bayesian statistics is not clear yet,
there is considerable interest in such computations among particle physicists.
We next illustrate this for  the
CHY and CEGM models in Sections~\ref{sec2} and \ref{sec4}.
We follow the set-up in \cite[Section 6.2]{arkani2019stringy}.

Let us begin with  the $k=2$ model in Section \ref{sec3},
with positive reparametrization as in~(\ref{eq:x_to_y}).

\begin{example}[$k=2,~m=6$] \label{ex:amplitudek2m6}
We compute the amplitude for the CHY model in
Example~\ref{ex:sixeight}.
In terms of the positive parameters $y_1,y_2,y_3$ from \eqref{eq:x_to_y},
the log-likelihood function in  (\ref{eq:loglike62})~is
$$ \begin{matrix}
L & =&\!\!  s_{23} \log(y_1)  + s_{34} \log(y_2) + s_{45} \log(y_3) + s_{24} \log(y_1 {+} y_2) + s_{25} 
\log(y_1 {+} y_2 {+} y_3) \,+ \qquad \qquad \\
& &\,\, s_{35} \log(y_2 {+} y_3) + s_{36} \log(1 {+} y_2 {+} y_3) + s_{46} \log(1{+}y_3) 
\,-\,  (\sum_{(i,j)} s_{ij} ) \log(1 {+} y_1 {+} y_2 {+} y_3) .
\end{matrix}
$$
The toric Hessian $H_L(y)$ is a symmetric $3 \times 3$-matrix
whose entries are rational functions.
The sum of the values of $- {\rm det}(H_L(y))^{-1}$ at the six critical points of $L$ is the string amplitude
\begin{equation}
\label{eq:14terms}  \begin{matrix}
& \,\,\frac{1}{s_{12} s_{34} s_{56}}
+\frac{1}{s_{12} s_{56}  s_{123}}
+\frac{1}{s_{23} s_{56}  s_{123}}
+\frac{1}{s_{23} s_{56}  s_{234}}
+\frac{1} {s_{34} s_{56} s_{234}}
+ \frac{1}{s_{16} s_{23} s_{45}}
+ \frac{1}{ s_{12} s_{34} s_{345}} \smallskip  \\
&   +\, \frac{1}{s_{12} s_{45} s_{123}}
+ \frac{1}{s_{12} s_{45} s_{345}}
+\frac{1}{s_{16} s_{23} s_{234}}
+\frac{1} {s_{16} s_{34} s_{234}}
+\frac{1}{s_{16} s_{34} s_{345}}
+\frac{1}{s_{16} s_{45} s_{345}}
+\frac{1}{s_{23} s_{45} s_{123}}.
\end{matrix}
\end{equation}
Here we abbreviate  $s_{ijk} = s_{ij} + s_{ik} + s_{jk}$. The $14$ terms in this sum correspond to
the planar trivalent trees with six labeled leaves, and hence to 
 the vertices of the {\em associahedron} in $\RR^3$.
 
 For a numerical example take the data  in (\ref{eq:data62}) and (\ref{eq:kinemat}).
The unique positive critical point  $(\hat y_1, \hat y_2, \hat y_3) =  (1.076..., 1.202...,  1.205...)$
maps to the MLE in (\ref{eq:learned}). The amplitude (\ref{eq:14terms})~equals
{\begin{small} $$
\frac{16074421}{56770632000} \,\, = \,\,  0.00028314676856...
$$
\end{small} }
Using the abbreviation $y_{i,j} = \sum_{i \leq \ell \leq j} y_\ell$, the associated integral
(\ref{eq:bayesintegral2}) has the form
$$ \varepsilon^3 \int_{\RR^3_{>0}} \left[ \frac{y_1^{s_{23}} y_2^{s_{34}} y_3^{s_{45}}}
{y_{1,2}^{-s_{24}}y_{1,3}^{-s_{25}}y_{2,3}^{-s_{35}}(1 + y_{2,3})^{-s_{36}}(1+y_3)^{-s_{46}}(1 + y_{1,3})^{\sum_{(i,j)} s_{ij}}}\right]^\varepsilon
\frac{{\rm d}y_1}{y_1} \frac{{\rm d}y_2}{y_2} \frac{{\rm d}y_3}{y_3}   .$$
The theory in \cite{arkani2019stringy} requires the hypotheses
$\, s_{23} \geq 0,~s_{34} \geq 0,~s_{45} \geq 0,~s_{24} \leq 0, ~ s_{25} \leq 0, ~ s_{35} \leq 0, ~ s_{36} \leq 0, ~ s_{46} \leq 0, ~ \sum_{(i,j)} s_{i,j} \geq 0$.  If this holds then
the leading order ($\varepsilon \rightarrow 0$) of the integral equals the volume of $(P-(s_{23},s_{34},s_{45}))^*$,
where $P$ is the associahedron
\begin{small} $$    c_{24} {\rm New}(y_{1,2}) + c_{25}{\rm New}(y_{1,3}) 
+ c_{35} {\rm New}(y_{2,3}) + c_{36} {\rm New}(1+y_{2,3}) 
+ c_{46} {\rm New}(1+y_3) + \sum_{(i,j)} s_{ij} {\rm New}(1+y_{1,3}). 
$$
\end{small} 
Here $c_{ij} = -s_{ij}$.
The hypothesis fails for (\ref{eq:data62}), but
summing over ${\rm Crit}(L)$ always works.
\end{example}

We now reiterate the punchline from Section~\ref{sec2} for amplitudes:~{\em current off-the-shelf software from numerical algebraic geometry is
  highly efficient and reliable for computing string amplitudes by
evaluating  the sum (\ref{eq:amplitude}).   For our computations we
   used  {\tt HomotopyContinuation.jl}} \cite{BRT, BT}. 
   We carried this out  for models with   $k =2$ and $k=3$.
If $v_1,\ldots,v_e > 0$ then
the amplitude measures the
volume of the dual polytope in Theorem~\ref{thm:arkani}.

For $k=2$, our computations validate known
formulas involving planar trees like (\ref{eq:14terms}).
For $k = 3, m \leq 7$, Cachazo et al.~\cite{AG} 
describe formulas
in terms of rays of the positive tropical Grassmannian,
but in general there is still plenty of room for further discovery.

\begin{example}[$k = 2$]
We used the positive parametrization \cite[(1.5)]{arkani2019stringy} of $\mathcal{M}^+_{0,m}$ to verify \eqref{eq:amplitude} for the CHY model.
   Fix integer values for the Mandelstam invariants such that the hypotheses on $u$ and $v$ in Theorem \ref{thm:arkani} are satisfied. We compute the volume of $(P-u)^*$ in two ways. First the exact rational number is obtained using {\tt Polymake.jl} \cite{KLT}. Secondly, summing over the computed critical points as in \eqref{eq:amplitude} gives a floating point approximation.  The cases we checked are $m = 5, 6, \ldots, 10$. Using double precision arithmetic, the numerical evaluation of \eqref{eq:amplitude} agrees with the volume up to at least 12 significant digits in all  cases. Computing the Hessian determinant and summing over the 5040 solutions for $m = 10$ takes about 20 seconds. The computation time for finding these solutions appears in Table \ref{tab:scatteringk2}.
\end{example}

\begin{example}[$k=3$]
For $m = 7$, we compute the string amplitude of the CEGM model for the
 data in (\ref{eq:data37}). Our code finds the numerical value
$ 3.5930250842 \cdot 10^{-19}$. 
This equals
\begin{scriptsize}
$$ \frac{33816298756441110112144384689994772682422222303915555493151512849490472904959110279}
{94116511751278934751720147762872529781445322700567349835265330333349828096628760994174245472501760000}.$$ \end{scriptsize}  
This rational number is computed with a formula from \cite[Section 4]{AG} which was kindly shared with us by
Nick Early. In our study of the string amplitudes for CEGM models,
  we used the positive parametrization obtained 
  from \eqref{eq:threebym} by recursively setting $x_0 = y_0 = 1$ and
\begin{equation} \label{eq:posparam}
x_\ell = x_{\ell-1} + z_\ell, \qquad y_\ell = y_{\ell-1} + z_\ell(1+w_1 + \cdots + w_\ell), \qquad \ell	= 1, \ldots m-4
\end{equation}
 Since this parametrization augments the degree of the equations, it is better 
  to first solve the scattering equations using the formulation \eqref{eq:threebym} and then compute the $(z,w)$ coordinates of the solutions $(x,y)$ via \eqref{eq:posparam}. Computing the sum \eqref{eq:amplitude} over the 1272 solutions takes about 11 seconds. Like Example \ref{ex:amplitudek2m6}, this illustrates the validity of \eqref{eq:amplitude} when the assumptions on $u,v$ in Theorem \ref{thm:arkani} are violated. For $m = 8$, we obtain the numerical approximation $1.3609103649662523 \cdot 10^{-34}$ for the string amplitude of the CEGM model with data \eqref{eq:bigmandelstam}.
\end{example}

We conclude with a summary of
what has been accomplished in this paper.
A connection has been made between
algebraic statistics and the study of
scattering amplitudes in physics.
Positive models play the role of positive geometries.
We showed how to solve the
likelihood equations with
certified numerical methods,
and how to use this for evaluating amplitudes.
Our case study offers a new tool kit for
 statistics and  physics, based on nonlinear algebra.

Here is what we did not do:
we did not prove new theorems in pure mathematics.
We did not achieve notable methodological progress
in statistics or theoretical advances in physics.
The contribution of this work lies in building a bridge.
Others may now cross that bridge, and  use our tool kit to gain
insights on the numerous fascinating problems that remain open.

\bigskip

\noindent
{\bf Acknowledgement}.
We are very grateful to Sascha Timme for his help with the
software {\tt HomotopyContinuation.jl},
and to Pieter Bomans and Taylor Brysiewicz for discussions. 
We thank Freddy Cachazo and Nick Early for 
patient tutoring and inspiring conversations.

\bigskip

\bigskip \bigskip

\noindent
\footnotesize 
{\bf Authors' addresses:}

\smallskip

\noindent Bernd Sturmfels,
MPI-MiS Leipzig and UC Berkeley 
\hfill {\tt bernd@mis.mpg.de}

\noindent Simon Telen, MPI-MiS Leipzig
\hfill {\tt simon.telen@mis.mpg.de}

\end{document}